\RequirePackage{fix-cm}

\documentclass[smallextended]{svjour3}
\usepackage{graphics,a4}
\usepackage{amsfonts,color,pslatex}
\usepackage{amssymb,amsmath,latexsym}
\usepackage{float}
\smartqed  
\usepackage{graphicx}

\newtheorem{result}{Result}

\newcommand{\tr}{{\rm Tr}}

\newcommand{\ef}{\mathbb{F}}

\def\tr{\mathrm{Tr}}

\makeatletter
\newcommand{\figcaption}{\def\@captype{figure}\caption}
\newcommand{\tabcaption}{\def\@captype{table}\caption}
\makeatother

\begin{document}

\title{Strongly Regular Graphs Constructed from $p$-ary Bent Functions
}

\author{Yeow Meng Chee  \and Yin Tan \and Xian De Zhang}

\institute{Y. M. Chee\and Y. Tan \and X. Zhang \at Division of Mathematical Sciences, School of Physical and Mathematical Sciences,
         Nanyang Technological University, 21 Nanyang Link Singapore 637371, \email{itanyinmath@gmail.com}
         \\
         Y. M. Chee, \email{ymchee@ntu.edu.sg}\\
         X. Zhang, \email{xiandezhang@ntu.edu.sg}\\
         Y. Tan is currently with Temasek Laboratories, National University of Singapore, 5A Engineering Drive 1,
         $\sharp$09-02, Singapore 117411.}

\date{Received: date / Accepted: date}

\maketitle

\begin{abstract}
    In this paper, we generalize the construction of strongly regular graphs in
	[Y. Tan et al., Strongly regular graphs associated with ternary bent functions, J. Combin.
    Theory Ser. A (2010), 117, 668-682] from
    ternary bent functions to $p$-ary bent functions, where $p$ is an odd prime. We obtain
    strongly regular graphs with three types of parameters.
	Using certain non-quadratic $p$-ary bent functions, our constructions can give rise to
	new strongly regular graphs for small parameters.
\keywords{strongly regular graphs\and partial difference sets \and $p$-ary bent functions\and (weakly) regular bent functions}
\end{abstract}

\section{Introduction}
Boolean bent functions were first introduced by Rothaus in 1976 in
\cite{bent}. They have been extensively studied for their
important applications in cryptography. Such functions have the
maximum Hamming distance to the set of all affine functions. In
\cite{kumar-scholtz-welch}, the authors generalized the notion of
a bent function to  be defined over a finite field of arbitrary
characteristic. Precisely, let $f$ be a function from
$\mathbb{F}_{p^n}$ to $\mathbb{F}_p$. The \textit{Walsh transform}
of $f$ is the complex valued function
$\mathcal{W}_f:\mathbb{F}_{p^n}\rightarrow \mathbb{C}$ defined by
$$\mathcal{W}_f(b):=\sum_{x\in\mathbb{F}_{p^n}}\zeta_p^{f(x)+\tr(bx)}, \quad b\in\mathbb{F}_{p^n},$$
where $\zeta_p$ is a primitive $p$-th root of unity and $\tr(x)$ is the absolute trace function, i.e. $\tr(x):=\sum_{i=0}^{n-1}x^{p^i}.$
The function $f$ is called \textit{$p$-ary bent} if every Walsh coefficient $\mathcal{W}_f(b)$
has magnitude $p^{n/2}$, i.e. $|\mathcal{W}_f(b)|=p^{n/2}$ for all $b\in \mathbb{F}_{p^n}$. Moreover, $f$ is called
\textit{regular} if there exists some function $f^*:\mathbb{F}_{p^n}\rightarrow\mathbb{F}_p$ such that
$\mathcal{W}_f(b)=p^{n/2}\zeta_p^{f^*(b)}$, and  $f$ is called \textit{weakly regular} if
$\mathcal{W}_f(b)=\mu p^{n/2}\zeta_p^{f^*(b)}$ for  some constant $\mu\in\mathbb{C}$ with $|\mu|=1$.
Obviously, regularity implies weak regularity.

It is shown in
\cite{helleseth-kholosha-monobent,helleseth-hollmann-kholosha-wang-xiang}
that quadratic bent functions and most monomial $p$-ary bent
functions are weakly regular, except one sporadic non-weakly
regular example. Recently, a new family of non-quadratic weakly
regular $p$-ary bent functions has been constructed in
\cite{helleseth-kholosha-binobent} and a new sporadic non-weakly
regular bent function was given. However, there are still few
non-quadratic bent functions known over the field $\ef_{p^n}$ when
$p\ge 5$.

There are many motivations to find more weakly regular bent
functions, especially non-quadratic ones. Recently, it is shown in
\cite{pott-yin-wcc}, \cite{tan-pott-srgbent} that, under some conditions, weakly regular bent
functions can be used to construct certain combinatorial objects,
such as strongly regular graphs and association schemes.
Precisely, let $f:\ef_{3^{2k}}\rightarrow\ef_3$ be a weakly
regular bent function. Define
$$D_i:=\{x: x \in\ef_{3^{2k}} | f(x)=i\}, \quad 0\le i\le 2.$$
It is shown in \cite{tan-pott-srgbent} that $D_0, D_1, D_2$ are all regular partial difference sets.
The Cayley graphs generated by $D_0,D_1,D_2$ in the additive group of $\ef_{3^{2k}}$
are strongly regular graphs. Some non-quadratic bent functions seem to give rise to
new families of strongly regular graphs up to isomorphism (see \cite[Tables 2,3]{tan-pott-srgbent}).

In this paper, we generalize the work in \cite{tan-pott-srgbent} by using $p$-ary bent functions to
construct strongly regular graphs. We show that if $f:\ef_{p^{2k}}\rightarrow\ef_p$ satisfying Condition A
(defined in Section 3), then the subsets
\begin{eqnarray}
\label{graphs}
\begin{array}{lll}
 &&D=\{x\in\ef_{p^{2k}}^*| f(x)=0\}, \\
 &&D_\mathcal{S}=\{x\in\ef_{p^{2k}}^*| f(x)\ \mbox{are non-zero squares}\}, \\
 &&D_\mathcal{S}^\prime=\{x\in\ef_{p^{2k}}^*| f(x)\ \mbox{are squares}\} \text{ and} \\
 &&D_\mathcal{N}=\{x\in\ef_{p^{2k}}^*| f(x)\ \mbox{are non-squares}\} \\
\end{array}
\end{eqnarray}
are regular partial difference sets. Using this construction, it seems that the
$p$-ary bent functions in \cite{helleseth-kholosha-binobent} may give rise to new negative Latin square type strongly regular graphs.
For small parameters, we have verified that the graphs are new.

The paper is organized as follows. In Section 2, we give
necessary definitions and results. The constructions of strongly
regular graphs will be given in Section 3. In Section 4, we
discuss the newness of the graphs obtained.

\section{Preliminaries}

Group rings and character theory are useful tools to study
difference sets. We refer to \cite{passman} for basic facts of
group rings and \cite{lidl-niederreiter} for character theory
on finite fields.

Let $G$ be a multiplicative group of order $v$. A $k$-subset $D$ of $G$ is a $(v,k,\lambda,\mu)$ \textit{partial
difference set} (PDS) if each non-identity element in $D$ can be represented as $gh^{-1}\ (g,h\in D, g \ne h)$
in exactly $\lambda$ ways, and each non-identity element in $G\backslash D$ can be represented as
$gh^{-1}\ (g,h\in D, g \ne h)$ in exactly $\mu$ ways. We shall always assume that the identity element
$1_G$ of $G$ is not contained in $D$. Using the group ring language, a $k$-subset
$D$ of $G$ with $1_G\not\in D$ is a $(v,k,\lambda,\mu)$-PDS if and only if the following
equation holds:
\begin{equation}
\label{PDSequation}
DD^{(-1)}=(k-\mu)1_G+(\lambda-\mu)D+\mu G.
\end{equation}

Combinatorial objects associated with partial difference sets are
strongly regular graphs. A graph $\Gamma$ with $v$ vertices is
called a $(v,k,\lambda,\mu)$ \textit{strongly regular graph} (SRG)
if each vertex is adjacent to exactly $k$ other vertices, any two
adjacent vertices have exactly $\lambda$ common neighbours, and any two non-adjacent vertices have exactly $\mu$
common neighbours.

Given a group $G$ of order $v$ and a $k$-subset $D$ of $G$ with $1_G\not\in D$
and $D^{(-1)}=D$, the
graph $\Gamma=(\textit{V},\textit{E})$ defined as follows is called
the \textit{Cayley graph} generated by $D$ in $G$:

\begin{itemize}
\item[(1)] The vertex set \textit{V} is $G$;
\item[(2)] Two vertices $g,h$ are joined by an edge if and only if
$gh^{-1}\in D$.
\end{itemize}

The following result points out the relationship between SRGs and
PDSs.

\begin{result}[\cite{mapds}]
\label{pdsandsrg} Let $\Gamma$ be the Cayley graph generated by a
$k$-subset $D$ of a multiplicative group $G$ with order
$v$. Then $\Gamma$ is a $(v,k,\lambda,\mu)$ strongly regular graph
if and only if $D$ is a $(v,k,\lambda,\mu)$-PDS with $1_G\not\in
D$ and $D^{(-1)}=D$.
\end{result}

Strongly regular graphs (or partial difference sets) with
parameters $(n^2,r(n+\varepsilon),-\varepsilon n+r^2+3\varepsilon
r,r^2+\varepsilon r)$ are called of \textit{Latin Square type}  if
$\varepsilon=-1$, and  of \textit{negative Latin Square type} if
$\varepsilon=1$. There are many constructions of SRGs of Latin
square type (any collection of $r-1$ mutually orthogonal Latin
squares gives rise to such a graph, see \cite{mapds}, for
instance), but only a few constructions of negative Latin square
type are known. We will show that certain weakly regular $p$-ary bent
functions can be used to construct SRGs of Latin square  and of
negative Latin square type.

Next we introduce the concept of association schemes. Let $V$ be a finite set of vertices,
and let $\{R_0,R_1,\ldots,R_d
\}$ be binary relations on $V$ with $R_0:=\{(x,x) : x\in V\}$. The
configuration $(V; R_0,R_1,\ldots,R_d)$ is called an
\textit{association scheme} of class $d$ on $V$ if the following
holds:

\begin{itemize}
\item[(1)] $V\times V=R_0\cup R_1\cup\cdots\cup R_d$ and
$R_i\cap R_j=\emptyset$ for $i\ne j$,
\item[(2)] $^tR_i=R_{i^\prime}$ for some $i^\prime\in\{0,1,\ldots,d\}$,
where $^tR_i:=\{(x,y) |(y,x)\in R_i\}$. If $i^\prime=i$, we call
$R_i$ is \textit{symmetric},
\item[(3)] For $i,j,k\in \{0,1,\ldots,d\}$ and for any pair $(x,y)\in R_k$, the number
$|\{z\in V\: |\:(x,z)\in R_i, (z,y)\in R_j|$ is a constant, which is denoted by
$p_{ij}^k$.
\end{itemize}
An association scheme is said to be \textit{symmetric} if every $R_i$ is symmetric.

Given an association scheme $(V; \{R_l\}_{0\le l\le d})$, we can
take the union of classes to form graphs with larger sets (this is
called \textit{fusion}), but it is not necessarily guaranteed that
the fused collection of graphs will form an association scheme on
$V$. If an association scheme has the property that any of its
fusions is also an association scheme, then we call the
association scheme \textit{amorphic}. Van Dam \cite{dam} proved
the following result.
\begin{result}
\label{amorphic} Let $V$ be a set of size $v$, and let
$\{G_1,G_2,\ldots,G_d\}$ be an edge-decomposition of the complete
graph on $V$, where each $G_i$ is a strongly regular graph on $V$.
If $G_i, 1\le i\le d,$ are all of Latin square type or all of
negative Latin square type, then the decomposition is a $d$-class
amorphic association scheme on $V$.
\end{result}

We conclude this section by recording the bent function in
\cite{helleseth-kholosha-binobent} as below.
\begin{result}
\label{binomialbent}
Let $n=4k$. Then the $p$-ary function $f(x)$ mapping $\ef_{p^n}$ to $\ef_p$ given by
$$f(x)=\tr_n(x^2+x^{p^{3k}+p^{2k}-p^k+1})$$
is a weakly regular bent function. Moreover, for $b\in\ef_{p^n}$ the corresponding Walsh
coefficient of $f(x)$ is equal to
$$\mathcal{W}_f(b)=-p^{2k}\zeta_p^{\tr_k(x_0)/4},$$
where $x_0$ is a unique solution in $\ef_{p^k}$ of the equation
$$b^{p^{2k}+1}+(b^2+x)^{(p^{2k}+1)/2}+b^{p^k(p^{2k}+1)}+(b^2+x)^{p^k(p^{2k}+1)/2}=0.$$
\end{result}

\section{The construction}
In this section, we construct SRGs using $p$-ary bent functions.
First we introduce some notations used throughout this section.
Let $f:\ef_{p^n}\rightarrow \ef_p$ be a weakly regular $p$-ary
bent function satisfying $f(-x)=f(x)$. Without loss of generality,
we may assume $f(0)=0$. If not, we can replace $f(x)$ with
$f(x)-f(0)$. For each $b\in\ef_{p^n}$, assume that
$\mathcal{W}_f(b)=\mu (\sqrt{p^*})^n\zeta_p^{f^*(b)}$, where
$\mu=\pm 1$ and $p^*=(-1)^{\frac{p-1}{2}}p$. Suppose that there
exists an integer $l$ with $(l-1,p-1)=1$ such that for each
$\alpha\in \ef_p$ and $x\in\ef_{p^n}$, $f(\alpha x)=\alpha^lf(x)$
holds. Let
$$D_i:=\{x\in\ef_{p^n}\: |\: f(x)=i\}, \quad 0\le i\le p-1.$$
Denote by $G$ and $H$ the additive groups of $\ef_{p^n}$ and
$\ef_p$ respectively. Clearly we have
$\sum\limits_{i=0}^{p-1}D_i=G$. Moreover, $D_i^{(-1)}=D_i$ for
each $0\le i\le p-1$ since $f(-x)=f(x)$. Define the group ring
elements in $\mathbb{Z}[\zeta_p](G)$:
$$L_t:=\sum_{i=0}^{p-1}D_i\zeta_p^{it},\; 0\le t\le p-1,$$
in particular $L_0=\ef_{p^n}$. The following result gives some properties of the $L_t$'s (see
\cite{pott-yin-wcc}).
\begin{result}
\label{lilj} $(1)$ If $s,t,s+t\ne 0$, then
$L_tL_s=\mu(\frac{tsv}p)^n(\sqrt{p^*})^nL_v$ with
$s^{1-l}+t^{1-l}=v^{1-l}$;

$(2)$ $L_tL_{-t}=p^n$ for $t\in\{1,\ldots,p-1\}$;

$(3)$ $\sum\limits_{t=1}^{p-1}L_tL_0\zeta_p^{-at}=(p|D_a|-p^n)\ef_{p^n}$.
\end{result}

It is clear that we may compute $D_i$'s from $L_t$'s, namely
$D_i=\frac{1}p\sum\limits_{t=0}^{p-1}L_t\zeta_p^{-it}$.
By Result \ref{lilj}, we have
\begin{equation}
\begin{array}{lll}
    \label{dadb}
    p^2D_aD_b&=&\sum\limits_{s,t=0}^{p-1}L_tL_s\zeta_p^{-at-bs}       \\                                 \\
             &=&L_0^2+\sum\limits_{s, t\ne 0\atop s+t\ne 0}L_tL_s\zeta_p^{-at-bs}
            +\sum\limits_{s=1}^{p-1}L_sL_{-s}\zeta_p^{s(a-b)}
            +\sum\limits_{s=1}^{p-1}L_sL_0\zeta_p^{-bs}+\sum\limits_{t=1}^{p-1}L_tL_0\zeta_p^{-at}           \\
            &=&p^n\ef_{p^n}+\sum\limits_{s, t, s+t\ne 0\atop s^{1-l}+t^{1-l}=v^{1-l}}
             \mu(\frac{tsv}p)^n(\sqrt{p^*})^n\zeta_p^{-at-bs}L_v+\sum\limits_{s=1}^{p-1}p^n\zeta_p^{s(a-b)}
             +(p|D_b|-p^n)\ef_{p^n}\\&&+(p(|D_a|-p^n)\ef_{p^n} \\
            &=&\sum\limits_{s=1}^{p-1}p^n\zeta_p^{s(a-b)}+(p(|D_a|+|D_b|)-p^n)\ef_{p^n}
           +\sum\limits_{s, t, s+t\ne 0\atop s^{1-l}+t^{1-l}=v^{1-l}}
            \mu(\frac{tsv}p)^n(\sqrt{p^*})^n\zeta_p^{-at-bs}L_v.\\
\end{array}
\end{equation}

In the following we only work on the field $\ef_{p^n}$ with
$n=2k$. Note that in this case the Walsh coefficient of $f$ can be
written as the form $\mathcal{W}_f(b)=(-1)^{\frac{(p-1)k}{2}}\mu
p^k\zeta_p^{f^*(b)}$. For each $b\in\ef_{p^{2k}}$, let $\chi_b$ be
the additive character of $G$ defined by
$\chi_b(x)=\zeta_p^{\tr(bx)}$, and  $\eta$ be the additive
character of $H$ defined by $\eta(x)=\zeta_p^x$. Now for each
$b\in\ef_{p^n}$,
$$
\mathcal{W}_f(b)=\sum_{x\in\ef_{p^{2k}}}\zeta_p^{f(x)+\tr(bx)}
                 =\sum_{i=0}^{p-1}(\sum\limits_{x\in D_i}\zeta_p^{\tr(bx)})\zeta_p^i
                 =\sum_{i=0}^{p-1}\chi_b(D_i)\zeta_p^i
                 =\chi_b\eta(R),$$
where $R=\{(x,f(x)): x \in\ef_{p^{2k}}\}$. Since $f$ is a bent function, we know that
$|\chi_b\eta(R)|=|\mathcal{W}_f(b)|=p^{k}$. \\

First we determine the cardinalities of $D_i$'s.

\begin{lemma}
    \label{D0size}
    Let $f:\ef_{p^{2k}}\rightarrow\ef_p$ be the bent function as above. Then

    $(1)$ $|D_1|=|D_2|=\cdots=|D_{p-1}|$,

    $(2)$ $|D_0|=p^{2k-1}+\epsilon (p^k-p^{k-1})$ and $|D_i|=p^{2k-1}-\epsilon p^{k-1}$ for each $1\le i\le p-1$,
    where $\epsilon=(-1)^{\frac{(p-1)k}{2}}\mu$.
\end{lemma}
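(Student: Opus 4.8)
The plan is to extract both assertions from the single identity $\mathcal{W}_f(b)=\sum_{i=0}^{p-1}\chi_b(D_i)\zeta_p^i$ together with the homogeneity property $f(\alpha x)=\alpha^l f(x)$, by examining what happens as $b$ ranges over $\ef_{p^n}$.

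For part $(1)$, the key observation is that the condition $(l-1,p-1)=1$ means the map $\alpha\mapsto\alpha^{l-1}$ is a bijection of $\ef_p^*$, so $f(\alpha x)=\alpha^l f(x)=\alpha^{l-1}(\alpha f(x))$ shows that multiplying $x$ by $\alpha\in\ef_p^*$ permutes the level sets: if $f(x)=i$ then $f(\alpha x)=\alpha^{l-1} i$, and as $\alpha$ runs over $\ef_p^*$ the value $\alpha^{l-1}i$ runs over all of $\ef_p^*$ when $i\neq 0$. Hence for any fixed nonzero $i,j$ there is a scalar $\alpha\in\ef_p^*$ with $x\mapsto\alpha x$ mapping $D_i$ bijectively onto $D_j$; this is a bijection of $\ef_{p^n}$ carrying $D_i$ to $D_j$, so $|D_i|=|D_j|$. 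That settles $(1)$.

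For part $(2)$, I would first record $\sum_{i=0}^{p-1}|D_i|=p^{2k}$, so with $|D_i|=m$ common for $1\le i\le p-1$ we get $|D_0|=p^{2k}-(p-1)m$; thus it suffices to pin down $m$ via one more linear relation. The natural source is the Walsh coefficient at $b=0$: $\mathcal{W}_f(0)=\sum_{i=0}^{p-1}|D_i|\zeta_p^i=|D_0|+m\sum_{i=1}^{p-1}\zeta_p^i=|D_0|-m=p^{2k}-pm$. On the other hand, by weak regularity in the $n=2k$ case, $\mathcal{W}_f(0)=(-1)^{\frac{(p-1)k}{2}}\mu\, p^k\zeta_p^{f^*(0)}=\epsilon\, p^k\zeta_p^{f^*(0)}$; since $\mathcal{W}_f(0)$ is a rational integer (being $p^{2k}-pm$), the root of unity $\zeta_p^{f^*(0)}$ must be $1$, i.e. $f^*(0)=0$, giving $\mathcal{W}_f(0)=\epsilon p^k$. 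Equating, $p^{2k}-pm=\epsilon p^k$, so $m=p^{2k-1}-\epsilon p^{k-1}$, and then $|D_0|=p^{2k}-(p-1)(p^{2k-1}-\epsilon p^{k-1})=p^{2k-1}+\epsilon(p^k-p^{k-1})$, as claimed.

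The main obstacle is the small rationality argument forcing $f^*(0)=0$: one must be sure that $p^{2k}-pm$ is genuinely an integer (it is, since each $|D_i|\in\bZ$) and that a nonzero integer multiple of $\zeta_p^j$ being real forces $j=0$ (true since $1,\zeta_p,\dots,\zeta_p^{p-2}$ are $\bQ$-linearly independent and $\zeta_p^{p-1}=-1-\zeta_p-\dots-\zeta_p^{p-2}$). Everything else is bookkeeping. An alternative to the rationality step, if one prefers, is to compute $\sum_b|\mathcal{W}_f(b)|^2=p^{2k}\cdot p^{2k}$ (Parseval) against $\sum_b\big|\sum_i\chi_b(D_i)\zeta_p^i\big|^2$ and expand using $\sum_b\chi_b(D_i)\overline{\chi_b(D_j)}=p^{2k}|D_i\cap D_j|=p^{2k}\delta_{ij}|D_i|$, which again yields $\sum_i|D_i|^2$ in terms of known quantities and closes the system; but the $b=0$ argument is shorter, so that is the route I would take.
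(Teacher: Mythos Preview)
Your argument for part~(2) is correct and is essentially the paper's: compute $\mathcal{W}_f(0)=\sum_i|D_i|\zeta_p^{i}=|D_0|-m$, note this is a rational integer so that $\zeta_p^{f^*(0)}=1$, and solve the resulting $2\times 2$ linear system.

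Part~(1), however, has a genuine gap. From $f(\alpha x)=\alpha^{l}f(x)$ you write ``if $f(x)=i$ then $f(\alpha x)=\alpha^{l-1}i$''; the exponent should be $l$, not $l-1$, and this is not a harmless slip. The hypothesis $f(-x)=f(x)$ together with $f\not\equiv 0$ forces $(-1)^{l}=1$, so $l$ is even; since $p-1$ is also even, the map $\alpha\mapsto\alpha^{l}$ on $\ef_p^{*}$ is \emph{never} a bijection---its image lies inside the nonzero squares. Hence scaling by $\alpha$ only gives $|D_i|=|D_j|$ when $j/i$ is an $l$-th power, not for arbitrary nonzero $i,j$. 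Concretely, for $p=5$, $l=4$ one has $(l-1,p-1)=1$ yet $\alpha^{l}=1$ for every $\alpha\in\ef_5^{*}$, so homogeneity yields nothing; for $p=3$ it never yields anything. The condition $(l-1,p-1)=1$ simply cannot be accessed through the exponent on $\alpha$ in this way. The paper takes a different route to~(1), working with the group-ring elements $L_t$ and the identity $\sum_{t=1}^{p-1}L_tL_0\zeta_p^{-at}=(p|D_a|-p^{2k})\ef_{p^{2k}}$ of Result~4(3), passing from $a$ to $b$ by replacing $\zeta_p$ with another primitive $p$-th root. A repair closer in spirit to your approach would be to read off from $\sum_i|D_i|\zeta_p^{i}=\epsilon p^{k}\zeta_p^{f^*(0)}$ and the $\mathbb{Q}$-linear independence of $1,\zeta_p,\dots,\zeta_p^{p-2}$ that all the $|D_i|$ coincide except at the single index $f^*(0)$, and then supply a separate argument that this exceptional index is $0$; but that last step still needs to be made.
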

\begin{proof}
    (1)  For any $1\le a,b\le p-1$, by Result \ref{lilj} (3) we
    have
$$
    (p|D_a|-p^{2k})\ef_{p^{2k}}=\sum_{t=1}^{p-1}L_tL_0\zeta_p^{-at}
    =\sum_{t=1}^{p-1}L_tL_0(\zeta_p^{ab^{-1}})^{-bt}=(p|D_b|-p^{2k})\ef_{p^{2k}}.
    $$ The last equality holds since $\zeta_p^{ab^{-1}}$ is also a primitive $p$-th root of
    unity.
    Thus $|D_a|=|D_b|$.

    (2) Let $\chi_0$ be the principal character of $\ef_{p^{2k}}$, we have
    $$
    \chi_0\eta(R)=|D_0|+\sum_{i=1}^{p-1}|D_i|\zeta_p^i=|D_0|+|D_1|(\zeta_p+\zeta_p^2+\cdots+\zeta_p^{p-1})=|D_0|-|D_1|.
    $$
    Since $\chi_b\eta(R)=\mathcal{W}_f(b)=(-1)^{\frac{(p-1)k}{2}}\mu
p^k\zeta_p^{f^*(b)}=\epsilon p^k\zeta_p^{f^*(b)}$ and
$|D_0|-|D_1|$ is a rational integer, we have
    \begin{equation}
        \label{eq1}
        |D_0|-|D_1|=\epsilon p^k.
    \end{equation}
    On the other hand,
    \begin{equation}
        \label{eq2}
        |D_0|+(p-1)|D_1|=p^{2k}.
    \end{equation}
    By solving Eqs. (\ref{eq1}) and (\ref{eq2}), the result follows.
\end{proof}

Next, we define Condition A for a function $f$ as follows.

\textbf{Condition A:}
  Let $f:\ef_{p^{2k}}\rightarrow\ef_p$ be a weakly regular bent function with $f(0)=0$ and $f(-x)=f(x)$,
  where $p$ is an odd prime. There exists an integer $l$ with $(l-1,p-1)=1$ such that
  $f(\alpha x)=\alpha^lf(x)$ for any $\alpha\in\ef_p$ and $x\in\ef_{p^{2k}}$. For each $b\in\ef_{p^{2k}}$,
 $\mathcal{W}_f(b)=\epsilon p^{k}\zeta_p^{f^*(b)}$, where $\epsilon=(-1)^{\frac{(p-1)k}{2}}\mu$ with $\mu=\pm 1$.

Two functions $f, g:\ef_{p^n}\rightarrow\ef_p$ are called \textit{affine equivalent}
if there exist an affine permutation $A_1$ of $\ef_p$ and an affine permutation $A_2$ of $\ef_{p^n}$ such that
$g=A_1\circ f\circ A_2$. Furthermore, they are called \textit{extended affine equivalent (EA-equivalent)}
if $g=A_1\circ f\circ A_2+A$, where $A:\ef_{p^n}\rightarrow\ef_p$ is an affine function.
A polynomial $L$ of the form $L(x)=\sum\limits_{i=0}^{n}a_ix^{p^i}\in\ef_{p}[x]$
is called a \textit{linearized polynomial}. Note that the affine permutations of $\ef_p$ are $cx+d$, where $c\in\ef_p^*$ and $d\in\ef_p$.

We have the following result.

\begin{proposition}
Let $L_1,L_2\in\ef_{p}[x]$ be linearized polynomials, where $L_1$ is a permutation of $\ef_p$ and $L_2$ is a permutation of $\ef_{p^{2k}}$.
If $L_1(1)\ne 0$ and $f$ satisfies Condition A, then the function $g=L_1\circ f\circ L_2$ satisfies Condition A.
\end{proposition}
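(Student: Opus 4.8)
The plan is to check each requirement of Condition A for $g=L_1\circ f\circ L_2$ separately; the only non-mechanical point is the shape of the Walsh spectrum. Write $L_1(x)=\sum_i a_i x^{p^i}\in\ef_p[x]$; since $x^{p^i}=x$ on $\ef_p$ and $f$ is $\ef_p$-valued, $L_1$ acts on $\ef_p$ as multiplication by $c:=L_1(1)=\sum_i a_i$, and the hypothesis $L_1(1)\ne 0$ (which is exactly what makes $L_1$ permute $\ef_p$) gives $c\in\ef_p^*$ and $g(x)=c\,f(L_2(x))$, a map $\ef_{p^{2k}}\to\ef_p$. Now $L_2$, being a linearized polynomial, is an $\ef_p$-linear bijection of $\ef_{p^{2k}}$; in particular $L_2(0)=0$ and $L_2(\alpha x)=\alpha L_2(x)$ for $\alpha\in\ef_p$. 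Hence $g(0)=c f(L_2(0))=c f(0)=0$; $g(-x)=c f(-L_2(x))=c f(L_2(x))=g(x)$ using $f(-x)=f(x)$; and $g(\alpha x)=c f(\alpha L_2(x))=c\alpha^l f(L_2(x))=\alpha^l g(x)$ for $\alpha\in\ef_p$, so the exponent $l$ carries over unchanged, and with it the condition $(l-1,p-1)=1$. It remains to compute $\mathcal W_g$.

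For this I would push $L_2$ out of the character sum. The substitution $x=L_2^{-1}(y)$ is a bijection of $\ef_{p^{2k}}$, and since $(x,y)\mapsto\tr(xy)$ is a nondegenerate $\ef_p$-bilinear form on $\ef_{p^{2k}}$, the $\ef_p$-linear bijection $L_2^{-1}$ has an adjoint $M$, again an $\ef_p$-linear bijection of $\ef_{p^{2k}}$, characterized by $\tr(b\,L_2^{-1}(y))=\tr(M(b)\,y)$ for all $b,y$. Therefore
\[
\mathcal W_g(b)=\sum_{x\in\ef_{p^{2k}}}\zeta_p^{\,c f(L_2(x))+\tr(bx)}=\sum_{y\in\ef_{p^{2k}}}\zeta_p^{\,c f(y)+\tr(M(b)y)}.
\]
Next I would absorb the scalar $c$ with a Galois automorphism. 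Let $\sigma_c\in\mathrm{Gal}(\mathbb Q(\zeta_p)/\mathbb Q)$ be determined by $\zeta_p\mapsto\zeta_p^{\,c}$. Applying $\sigma_c$ to $\mathcal W_f(c^{-1}M(b))=\sum_y\zeta_p^{\,f(y)+\tr(c^{-1}M(b)y)}$ and using $c\cdot\tr(c^{-1}M(b)y)=\tr(M(b)y)$ (valid because $c\in\ef_p$) produces exactly the right-hand side above, so $\mathcal W_g(b)=\sigma_c\bigl(\mathcal W_f(c^{-1}M(b))\bigr)$.

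Finally, feed in Condition A for $f$, namely $\mathcal W_f(a)=\epsilon\,p^k\zeta_p^{\,f^*(a)}$ with $\epsilon=(-1)^{(p-1)k/2}\mu$ and $\mu=\pm1$. Since $\epsilon$ and $p^k$ lie in $\mathbb Q$, $\sigma_c$ fixes them and sends $\zeta_p^{\,f^*(a)}$ to $\zeta_p^{\,c f^*(a)}$, whence
\[
\mathcal W_g(b)=\epsilon\,p^k\,\zeta_p^{\,c\,f^*(c^{-1}M(b))}.
\]
In particular $|\mathcal W_g(b)|=p^k$ for every $b$, so $g$ is bent, and $\mathcal W_g(b)=\epsilon\,p^k\zeta_p^{\,g^*(b)}$ with $g^*(b):=c\,f^*(c^{-1}M(b))$ and the same $\epsilon$ (equivalently the same $\mu=\pm1$), so $g$ is weakly regular and meets the spectral clause of Condition A. Together with the first paragraph this shows $g$ satisfies Condition A. The only real obstacle is the middle step: correctly moving the linearized permutation $L_2$ out of the exponent and recognising its net effect as the invertible linear change $b\mapsto c^{-1}M(b)$ in the argument of $f^*$; once the trace-adjoint is set up, the scaling by $c$ is disposed of by the single automorphism $\sigma_c$, and everything else is bookkeeping.
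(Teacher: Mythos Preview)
Your proof is correct and follows essentially the same route as the paper: reduce $L_1$ to multiplication by $c=L_1(1)\in\ef_p^*$, substitute $y=L_2(x)$, pass to the trace-adjoint of $L_2^{-1}$, and identify the resulting sum with (a Galois twist of) $\mathcal W_f$ at a shifted argument. Your explicit use of the automorphism $\sigma_c$ is a cleaner formulation of the paper's remark that $\zeta_p^{\,c}$ is again a primitive $p$-th root of unity, and in fact gives the correct dual $g^*(b)=c\,f^*(c^{-1}M(b))$, whereas the paper's final displayed formula omits the factor $c$ in the exponent.
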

\begin{proof}
That $g(0)=0, g(-x)=g(x)$ and $g(\alpha x)=\alpha^lg(x)$ for $\alpha\in\ef_p$ are easy to be verified.
We only need to prove that for each $b\in\ef_{p^{2k}}$,
$\mathcal{W}_g(b)=\epsilon p^{k}\zeta_p^{g^*(b)}$, where $\epsilon=(-1)^{\frac{(p-1)k}{2}}\mu$ with $\mu=\pm 1$.
First assume that $L_1(x)=cx$. Note that $L_1(1)\ne 0$ implies that $c\ne 0$ and $\zeta_p^c$ is also a primitive $p$-th root of unity. Now
\begin{eqnarray*}
  \begin{array}{lll}
    \mathcal{W}_g(b)&=&\sum\limits_{x\in\ef_{p^{2k}}}\zeta_p^{L_1(f(L_2(x))+\tr(bx)}=\sum\limits_{x\in\ef_{p^{2k}}}(\zeta_p^c)^{f(L_2(x)+\tr(c^{-1}bx)} \\
                    &=&\sum\limits_{y\in\ef_{p^{2k}}}(\zeta_p^c)^{f(y)+\tr(c^{-1}bL_2^{-1}(y)}=\sum\limits_{y\in\ef_{p^{2k}}}(\zeta_p^c)^{f(y)+\tr((L_2^{-1})^\star(c^{-1}b)y)} \\
                    &=&\mathcal{W}_f((L_2^{-1})^\star(c^{-1}b))=\epsilon p^{k}\zeta_p^{f^*((L_2^{-1})^\star(c^{-1}b))}, \\
  \end{array}
\end{eqnarray*}
where $(L_2^{-1})^\star$ is the adjoint operator of $L_2^{-1}$. We finish the proof.
\end{proof}

\begin{remark}
Clearly the function $g$ in the above Proposition is affine equivalent to $f$. However, for a function $g$ which is
EA-equivalent but not affine equivalent to $f$, it may be seen that $g$ does not satisfy Condition A. Indeed, assume that
$g=A_1\circ f\circ A_2+A$, then $g(\alpha x)\ne \alpha^{l-1}g(x)$ for $\alpha\in\ef_p$ when $l > 2$.
\end{remark}

Now assume that a function $f:\ef_{p^{2k}}\rightarrow\ef_p$ satisfies
Condition A,  then clearly the functions $L_1\circ f\circ L_2$ all satisfy Condition A, where
$L_1,L_2\in\ef_{p}[x]$ are linearized polynomials, and $L_1$ is a permutation of $\ef_p$, $L_2$ is a permutation of $\ef_{p^{2k}}$.
We see that the functions of the form $L_1\circ f\circ L_2$ are affine equivalent to $f$. However, for a function $g$ which is
EA-equivalent but not affine equivalent to $f$, it may be seen that $g$ does not satisfy Condition A.

Now we will prove the first result in this section.

\begin{theorem}
\label{thm1}
    Let $f$ be a function satisfying condition A.
    Let $$D:=\{x: x \in \ef_{p^{2k}}^*| f(x)=0\}.$$ Then $D$ is a $(v,d,\lambda_1,\lambda_2)$-PDS, where
  \begin{equation}
      \label{para1}
      \begin{array}{lll}
        v&=&p^{2k},\\
        d&=&(p^{k}-\epsilon)(p^{k-1}+\epsilon),\\
        \lambda_1&=&(p^{k-1}+\epsilon)^2-3\epsilon(p^{k-1}+\epsilon)+\epsilon p^k,\\
        \lambda_2&=&(p^{k-1}+\epsilon)p^{k-1}.\\
      \end{array}
  \end{equation}
\end{theorem}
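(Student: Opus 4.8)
The plan is to verify the defining group‑ring identity~(\ref{PDSequation}) for $D$ directly. Since $f(0)=0$ we have $0\in D_0$, so in the group ring $\mathbb{Z}(G)$ (and a fortiori in $\mathbb{Z}[\zeta_p](G)$) we may write $D=D_0-1_G$. As $f(-x)=f(x)$ gives $D_0^{(-1)}=D_0$, hence $D^{(-1)}=D$, and as $1_G\notin D$ by construction, it suffices to show
\[
D^2=(d-\lambda_2)\,1_G+(\lambda_1-\lambda_2)\,D+\lambda_2\,G
\]
with $v,d,\lambda_1,\lambda_2$ as in~(\ref{para1}). Via $D=D_0-1_G$ this reduces to computing $D_0^2$ in the group ring.

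For $D_0^2$ I would specialise the identity~(\ref{dadb}) (already derived from Result~\ref{lilj}) to $a=b=0$. Three things simplify because $n=2k$: the factor $(\sqrt{p^*})^{n}$ becomes $(\sqrt{p^*})^{2k}=(p^*)^k=(-1)^{(p-1)k/2}p^k$, so $\mu(\sqrt{p^*})^{2k}=\epsilon p^k$; every Legendre symbol $\bigl(\tfrac{tsv}{p}\bigr)^{2k}$ equals $1$ (the indices $s,t$ are nonzero, and the associated $v$ is nonzero because $v^{1-l}=s^{1-l}+t^{1-l}$ is then nonzero); and the character factor $\zeta_p^{-at-bs}$ becomes $1$. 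The combinatorial heart of the argument is then to count, for each fixed $v\in\{1,\dots,p-1\}$, the pairs $(s,t)$ with $s,t\ne0$, $s+t\ne0$ and $s^{1-l}+t^{1-l}=v^{1-l}$. Since $p-1$ is even and $(l-1,p-1)=1$, the exponent $l-1$ is odd, so $l$ is even and $x\mapsto x^{1-l}$ is a permutation of $\ef_p^*$ with $(-x)^{1-l}=-x^{1-l}$; writing $u=s^{1-l}$, $w=t^{1-l}$, the conditions become $u,w\in\ef_p^*$ with $u+w=v^{1-l}$, while the side condition $s+t\ne0$ is equivalent to $u+w\ne0$ and hence automatic. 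There are exactly $p-2$ such pairs (pick $u\in\ef_p^*$ with $u\ne v^{1-l}$). Therefore $\sum_{s,t\ne0,\,s+t\ne0}L_v=(p-2)\sum_{v=1}^{p-1}L_v=(p-2)(pD_0-\ef_{p^{2k}})$, using $\sum_{t=0}^{p-1}L_t=pD_0$ and $L_0=\ef_{p^{2k}}$.

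Substituting this, together with $|D_0|=p^{2k-1}+\epsilon(p^k-p^{k-1})$ from Lemma~\ref{D0size}, into~(\ref{dadb}) and dividing by $p^2$ should give
\[
D_0^2=p^{k-1}(p^{k-1}+\epsilon)\,\ef_{p^{2k}}+(p-1)p^{2k-2}\,1_G+\epsilon p^{k-1}(p-2)\,D_0 .
\]
Finally, replacing $D_0$ by $D+1_G$ throughout and collecting the coefficients of $G$, $D$ and $1_G$ yields an identity $D^2=\lambda_2 G+(\lambda_1-\lambda_2)D+(d-\lambda_2)1_G$, and a short computation using $\epsilon^2=1$ checks that these coefficients are exactly $\lambda_2=(p^{k-1}+\epsilon)p^{k-1}$, $\lambda_1-\lambda_2=\epsilon p^{k-1}(p-2)-2$ and $d-\lambda_2=(p-1)p^{2k-2}+\epsilon p^{k-1}(p-2)-1$ read off from~(\ref{para1}); this is precisely~(\ref{PDSequation}), so $D$ is the claimed PDS. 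I expect the only genuinely delicate step to be the combinatorial count $p-2$ above --- in particular ensuring that the constraint $s+t\ne0$ is equivalent to $s^{1-l}+t^{1-l}\ne0$ (where the evenness of $l$ is used) and that no degenerate term with $v=0$ arises --- together with bookkeeping of the intertwined signs $\mu$, $(-1)^{(p-1)k/2}$ and $\epsilon$. A convenient consistency check is to compare coefficient sums on both sides of the displayed formula for $D_0^2$, both of which must equal $|D_0|^2$.
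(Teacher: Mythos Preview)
Your proposal is correct and follows essentially the same route as the paper: specialise the identity~(\ref{dadb}) at $a=b=0$, evaluate the resulting sum $\sum L_v$ as $(p-2)\sum_{v=1}^{p-1}L_v=(p-2)(pD_0-\ef_{p^{2k}})$, substitute $|D_0|$ from Lemma~\ref{D0size}, and then pass from $D_0$ to $D=D_0-1_G$. Your write-up is in fact more careful than the paper's on the one point that deserves justification, namely why each nonzero $v$ is hit by exactly $p-2$ admissible pairs $(s,t)$: the paper simply asserts $\sum_{s,t,s+t\ne 0}L_v=\sum_{i=1}^{p-1}(p-2)L_i$, whereas you explain via the parity of $l$ that $s+t\ne 0$ is equivalent to $s^{1-l}+t^{1-l}\ne 0$ under the bijection $x\mapsto x^{1-l}$, so no degenerate $v=0$ term appears and the count is immediate.
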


\begin{proof}
    Recall that $L_t=\sum\limits_{i=0}^{p-1}D_i\zeta_p^{it}$. By Eq. (\ref{dadb}), we have:
    \begin{equation}
       \label{computation1}
          p^2D_0D_0=(p-1)p^{2k}+(2p|D_0|-p^{2k})\ef_{p^{2k}}
           +\epsilon\sum\limits_{s, t, s+t\ne 0\atop s^{1-l}+t^{1-l}=v^{1-l}}p^kL_v,\\
    \end{equation}
    Now we compute the last term of Eq. (\ref{computation1}).
    \begin{equation}
    \label{lasterm}
    \begin{array}{lll}
    \sum\limits_{s, t, s+t\ne 0\atop s^{1-l}+t^{1-l}=v^{1-l}}p^kL_v&=&p^k\sum\limits_{i=1}^{p-1}(p-2)L_i\\
                                               &=&(p-2)p^k\sum\limits_{i=1}^{p-1}(D_0+\sum\limits_{j=1}^{p-1}D_j\zeta_p^{ji})\\
                                               &=&(p-2)p^k((p-1)D_0+\sum\limits_{j=1}^{p-1}D_j(\sum\limits_{i=1}^{p-1}\zeta_p^{ji}))\\
                                               &=&(p-2)p^k((p-1)D_0-\sum\limits_{j=1}^{p-1}D_j)\\
                                               &=&(p-2)p^k((p-1)D_0-(\ef_{p^{2k}}-D_0))\\ [2ex]
                                               &=&(p-2)p^k(pD_0-\ef_{p^{2k}}).\\
    \end{array}
    \end{equation}
 Substituting Eq. (\ref{lasterm}) in Eq. (\ref{computation1}), we have
    \begin{equation}
    \label{computation2}
    \begin{array}{lll}
     p^2D_0D_0&=&(p-1)p^{2k}+(2p|D_0|-p^{2k})\ef_{p^{2k}}+\epsilon (p-2)p^{k}(pD_0-\ef_{p^{2k}}) \\ [2ex]
         &=&(p-1)p^{2k}+\epsilon p^{k+1}(p-2)D_0+(2p|D_0|- p^{2k}-\epsilon (p-2)p^k)\ef_{p^{2k}}. \\
    \end{array}
    \end{equation}
  Note that $D+0=D_0$ and $|D_0|=p^{2k-1}+\epsilon(p^k-p^{k-1})$ (Lemma \ref{D0size}),
    by Eq. (\ref{computation2}) we have
    $$
    p^2(D+0)^2=(p-1)p^{2k}+\epsilon p^{k+1}(p-2)(D+0)+(2p|D_0|- p^{2k}-\epsilon (p-2)p^k)\ef_{p^{2k}}.
    $$
    After simplifying, we get the equation
    $$
    D^2=(p^{2k-1}-p^{2k-2}+\epsilon p^k-2\epsilon p^{k-1}-1)+(\epsilon p^k-2\epsilon p^{k-1}-2)D+(\epsilon p^{k-1}+p^{2k-2})\ef_{p^{2k}}.
    $$
    By Eq. (\ref{PDSequation}), the proof is done.
\end{proof}

\begin{remark}
    When $\epsilon=-1$ in Theorem \ref{thm1}, we get negative Latin square type SRGs. When $p=3$, some new SRGs
    arise using non-quadratic ternary bent functions; see \cite[Tables 2,3]{tan-pott-srgbent}.
	Unfortunately, when $p\ge 5$, for the known bent functions, we don't get new graphs.
\end{remark}

To give another construction of SRGs using $p$-ary bent functions,
we show two lemmas first.

\begin{lemma}
    \label{stl}
    Let  $S$ and $T$ be the sets of non-zero squares and non-squares in $\ef_p$ respectively, where $p$ is an odd prime.
    Then we have

    $(1)$ when $p\equiv1 \pmod 4$,
    $$
    \begin{array}{lll}
        S^2&=&\frac{p-1}2+\frac{p-5}4S+\frac{p-1}4T, \\
        T^2&=&\frac{p-1}2+\frac{p-1}4S+\frac{p-5}4T, \\
        ST &=&TS=\frac{p-1}4(S+T);                  \\
    \end{array}
    $$

    $(2)$ when $p\equiv3 \pmod 4$,
    $$
    \begin{array}{lll}
        S^2 &=& \frac{p-3}4S+\frac{p+1}4T, \\
        T^2 &=& \frac{p+1}4S+\frac{p-3}4T, \\
        ST  &=& TS=\frac{p+1}4+\frac{p-3}4\ef_p.   \\
    \end{array}
    $$
\end{lemma}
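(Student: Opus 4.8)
The plan is to work in the integral group ring $\mathbb Z[\ef_p]$ of the additive group of $\ef_p$, writing $1$ for its identity (the element $0$), $P:=\sum_{x\in\ef_p}x$ for the group sum, and $\chi$ for the quadratic character of $\ef_p$ (so $\chi(x)=1$ on nonzero squares, $-1$ on non-squares, $0$ at $0$). I would introduce the element $\psi:=\sum_{x\in\ef_p^*}\chi(x)\,x\in\mathbb Z[\ef_p]$, so that $S=\tfrac12(P-1+\psi)$ and $T=\tfrac12(P-1-\psi)$, whence $S+T=P-1$ and $S-T=\psi$. Expanding the three products gives
$$S^2=\tfrac14\big[(P-1)^2+2(P-1)\psi+\psi^2\big],\qquad ST=\tfrac14\big[(P-1)^2-\psi^2\big],\qquad T^2=\tfrac14\big[(P-1)^2-2(P-1)\psi+\psi^2\big],$$
so everything reduces to evaluating the three group-ring elements $(P-1)^2$, $(P-1)\psi$ and $\psi^2$.

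The first two are routine: from $P^2=pP$, $P\cdot h=P$ for all $h\in\ef_p$, and $\sum_{x}\chi(x)=0$ one gets $(P-1)^2=(p-2)P+1$ and $(P-1)\psi=P\psi-\psi=-\psi$ (since $P\psi=\big(\sum_x\chi(x)\big)P=0$). The substantive step is $\psi^2$. I would read off the coefficient of an arbitrary $c\in\ef_p$ in $\psi^2=\sum_{x,y}\chi(x)\chi(y)(x+y)$, namely $\sum_{x}\chi\big(x(c-x)\big)$. For $c=0$ this equals $\chi(-1)(p-1)$; for $c\neq 0$ the substitution $x=ct$ turns it into $\sum_t\chi\big(c^2t(1-t)\big)=\chi(-1)\sum_t\chi\big(t^2-t\big)$, which is $-\chi(-1)$ by the classical evaluation $\sum_{t\in\ef_p}\chi(t^2+a)=-1$ for $a\neq 0$ (this is precisely where the Gauss-sum relation $g(\chi)^2=(\tfrac{-1}{p})p$ enters). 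Hence $\psi^2=\chi(-1)\big(p\cdot 1-P\big)$; since $\chi(-1)=1$ when $p\equiv1\pmod 4$ and $\chi(-1)=-1$ when $p\equiv3\pmod4$, this splits into $\psi^2=p\cdot 1-P$ and $\psi^2=P-p\cdot 1$ respectively, which is the only reason the statement is case-split.

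It then remains to substitute these evaluations into the three expansions and rewrite the results in the basis determined by $S$ and $T$ via $P=S+T+1$ and $\psi=S-T$; collecting coefficients yields exactly the six identities (the factors $\tfrac14$ clear because $p-1\equiv0\pmod4$ in the first case and $p-3\equiv0\pmod4$ in the second). This final part is pure bookkeeping, so the single genuine ingredient — and the only real obstacle — is the character-sum evaluation $\sum_t\chi(t^2-t)=-1$, which is standard (see \cite{lidl-niederreiter}). As a consistency check one may recover the constant terms directly: the coefficient of $0$ in $S^2$ counts ordered pairs of nonzero squares summing to $0$, which is $|S|=\tfrac{p-1}2$ when $-1$ is a square and $0$ otherwise, matching the stated constants.
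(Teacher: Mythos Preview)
Your proof is correct and takes a genuinely different route from the paper. The paper's argument is essentially a citation: for $p\equiv 1\pmod 4$ it quotes the result of Arasu--Ding--Helleseth--Kumar--Martinsen that $S$ is a $(p,\tfrac{p-1}{2},\tfrac{p-5}{4},\tfrac{p-1}{2})$ almost difference set, which immediately yields $S^2$, and then obtains $T^2$ and $ST$ from $T=\ef_p-0-S$; for $p\equiv 3\pmod 4$ it quotes the classical Paley $(p,\tfrac{p-1}{2},\tfrac{p-3}{4})$ difference set to get $ST=SS^{(-1)}$ first, and proceeds similarly. Your argument instead introduces the ``Gauss-sum element'' $\psi=\sum_x\chi(x)\,x$ and reduces all six identities to the single evaluation $\psi^2=\chi(-1)(p\cdot 1-P)$, which you obtain from the standard character sum $\sum_t\chi(t^2+a)=-1$. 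The paper's approach is shorter on the page because it outsources the substantive step to the literature; yours is self-contained, treats both congruence classes uniformly (the case split appears only at the very end via the value of $\chi(-1)$), and makes transparent exactly which arithmetic fact is doing the work. Either is perfectly adequate here, since the lemma is auxiliary and both the almost-difference-set result and the quadratic character sum are textbook material.
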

\begin{proof}
    (1) Note that $-1$ is a square when $p\equiv1 \pmod 4$ and $S^{(-1)}=S, T^{(-1)}=T$ in this case.
    By \cite[Theorem 2]{ding-helleseth-ads}, $S$ is a
    $(p,\frac{p-1}2,\frac{p-5}4,\frac{p-1}2)$ almost difference set in $\ef_p$, which implies that
    $$ SS^{(-1)}=S^2=\frac{p-1}2+\frac{p-5}4S+\frac{p-1}4T.$$
    It is easy to see that $T=\ef_p-0-S$, and hence $TT^{(-1)}=T^2=(\ef_p-0-S)^2$. Now the results can be
    followed by direct computation.

    (2) When $p\equiv3 \pmod 4$, $-1$ is a non-square, and $S^{(-1)}=T, T^{(-1)}=S$.
    It is well known that the subset $S$ is a  $(p,\frac{p-1}2,\frac{p-3}4)$ difference set in $\ef_p$. Hence
    $$SS^{(-1)}=ST=\frac{p+1}4+\frac{p-3}4\ef_p.$$
    The computations are similar to those in $(1)$.
\end{proof}

\begin{lemma}
    \label{m(1+m)}
    Let $\zeta_p$ be a primitive $p$-th root of unity,  $S$ and $T$ be the sets of non-zero squares and non-squares of
    $\ef_p$ respectively. Define $m=\sum_{i\in S}\zeta_p^{-i}$, then

    (1) when $p\equiv1 \pmod 4$, $-m(1+m)=-\frac{p-1}4$;

    (2) when $p\equiv3 \pmod 4$, $-m(1+m)=\frac{p+1}4$.
\end{lemma}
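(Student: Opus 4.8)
The plan is to recast the quantity $-m(1+m)$ as a product of two conjugate‑looking character sums and then invoke the classical evaluation of the quadratic Gauss sum over $\ef_p$.

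\textbf{Reduction to a product.} Set $m' := \sum_{i\in T}\zeta_p^{-i}$. As $i$ runs through $S$ and then through $T$, the exponent $-i$ runs through all of $\ef_p^*$, so $m+m' = \sum_{a=1}^{p-1}\zeta_p^a = -1$. Hence $1+m = -m'$, and therefore
\[
-m(1+m) = m\,m'.
\]
So it suffices to evaluate $m\,m'$.

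\textbf{Introducing the Gauss sum.} Let $g := \sum_{a\in\ef_p^*}\left(\frac{a}{p}\right)\zeta_p^a = \sum_{i\in S}\zeta_p^i-\sum_{i\in T}\zeta_p^i$ be the quadratic Gauss sum. Since $\sum_{i\in S}\zeta_p^i+\sum_{i\in T}\zeta_p^i=-1$, we get $\sum_{i\in S}\zeta_p^i=\frac{g-1}{2}$ and $\sum_{i\in T}\zeta_p^i=\frac{-g-1}{2}$. Now $\{-i:i\in S\}$ equals $S$ when $-1$ is a square (i.e. $p\equiv 1\pmod 4$) and equals $T$ when $-1$ is a non-square (i.e. $p\equiv 3\pmod 4$), and symmetrically for $T$. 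In either case this shows that, as an unordered pair, $\{m,m'\}=\{\frac{g-1}{2},\frac{-g-1}{2}\}$, so
\[
-m(1+m)=m\,m'=\frac{(g-1)(-g-1)}{4}=\frac{1-g^2}{4}.
\]
Finally, by the standard evaluation of the quadratic Gauss sum (see \cite{lidl-niederreiter}), $g^2=\left(\frac{-1}{p}\right)p=(-1)^{(p-1)/2}p$, i.e. $g^2=p$ if $p\equiv 1\pmod 4$ and $g^2=-p$ if $p\equiv 3\pmod 4$. Substituting gives $-m(1+m)=\frac{1-p}{4}=-\frac{p-1}{4}$ in the first case and $-m(1+m)=\frac{1+p}{4}$ in the second, which are exactly the two claimed identities.

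\textbf{On the main difficulty.} There is essentially no obstacle here: after the reduction in the first step, the whole argument rests on the two classical identities $\sum_{i\in S}\zeta_p^i+\sum_{i\in T}\zeta_p^i=-1$ and $g^2=(-1)^{(p-1)/2}p$. The only point demanding a little care is the sign bookkeeping in the second step (which of $m,m'$ equals $\frac{g-1}{2}$, depending on $p\bmod 4$), but since only the product $m\,m'$ is needed, this never matters. An alternative route avoiding Gauss sums would be to expand $m\,m'=\sum_{k\in\ef_p}N(k)\,\zeta_p^{-k}$ with $N(k)=\#\{(i,j)\in S\times T:i+j=k\}$ and evaluate the $N(k)$ using the cyclotomic numbers implicit in Lemma \ref{stl}; this is also feasible but more computational than the Gauss‑sum argument above.
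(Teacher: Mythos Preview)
Your proof is correct, and the initial reduction $-m(1+m)=mm'$ with $m'=\sum_{i\in T}\zeta_p^{-i}$ is exactly how the paper begins as well. From there, however, the two arguments diverge: the paper expands $mm'=\sum_{i\in S,\,j\in T}\zeta_p^{-(i+j)}$ and reads off the coefficients directly from Lemma~\ref{stl} (i.e.\ the group-ring identity for $ST$), which for $p\equiv 1\pmod 4$ gives $ST=\frac{p-1}{4}(S+T)$ and hence $mm'=\frac{p-1}{4}\sum_{k\in\ef_p^*}\zeta_p^{-k}=-\frac{p-1}{4}$; this is precisely the ``alternative route'' you sketched at the end. Your main argument instead identifies $\{m,m'\}=\{\tfrac{g-1}{2},\tfrac{-g-1}{2}\}$ via the quadratic Gauss sum and invokes $g^2=(-1)^{(p-1)/2}p$. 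The Gauss-sum approach has the advantage of being independent of Lemma~\ref{stl} and of treating the two congruence classes of $p$ uniformly in a single formula $mm'=\frac{1-g^2}{4}$; the paper's approach has the virtue of staying entirely within the cyclotomic/difference-set framework already set up, making Lemma~\ref{m(1+m)} an immediate corollary of Lemma~\ref{stl}.
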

\begin{proof}
We only prove the case $p\equiv1 \pmod 4$, the proof of (2) is
similar. Note that $-(1+m)=\sum_{i\in T}\zeta_p^{-i}$, hence
$-m(1+m)=\sum_{i\in S, j\in T}\zeta_p^{-(i+j)}$. By Lemma
\ref{stl} (1), we know that $\sum_{i\in S, j\in
T}\zeta_p^{-(i+j)}=\frac{p-1}4\sum_{i\in\ef_p^*}\zeta_p^{-i}=-\frac{p-1}4$.
\end{proof}

Now we prove the following result.
\begin{theorem}
  \label{thm2}
  Let $f$ be a function satisfying Condition A.
  Let $$D_\mathcal{S}:=\{x: x \in \ef_{p^{2k}}^*| f(x) \ \mbox{are non-zero squares}\},$$ then $D_\mathcal{S}$ is a
  $(v,d,\lambda_1,\lambda_2)$-PDS, where
  \begin{equation}
      \label{para2}
      \begin{array}{lll}
       v&=&p^{2k},\\
       d&=&\frac{1}2(p^k-p^{k-1})(p^k-\epsilon),\\
       \lambda_1&=&\frac{1}4(p^k-p^{k-1})^2-\frac{3\epsilon}2(p^k-p^{k-1})+p^k\epsilon, \\
       \lambda_2&=&\frac{1}2(p^k-p^{k-1})(\frac{1}2(p^k-p^{k-1})-\epsilon).\\
      \end{array}
  \end{equation}
\end{theorem}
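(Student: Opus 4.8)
The plan is to follow the template of the proof of Theorem~\ref{thm1}. Since $0\notin D_{\mathcal{S}}$ (because $f(0)=0$ is not a non-zero square) and $D_{\mathcal{S}}^{(-1)}=D_{\mathcal{S}}$ (each $D_i^{(-1)}=D_i$ as $f(-x)=f(x)$), Result~\ref{pdsandsrg} reduces the statement to checking that the group ring element $D_{\mathcal{S}}=\sum_{i\in S}D_i\in\mathbb{Z}[\zeta_p](G)$ satisfies Eq.~(\ref{PDSequation}) with the claimed parameters. So I would compute $D_{\mathcal{S}}^2=\sum_{a,b\in S}D_aD_b$ by substituting Eq.~(\ref{dadb}) (with $n=2k$, so that $\mu(\sqrt{p^*})^n=\epsilon p^k$ and $(\tfrac{tsv}{p})^n=1$) into each summand. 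This splits $p^2D_{\mathcal{S}}^2$ into three parts: (i)~$\sum_{a,b\in S}\sum_{s=1}^{p-1}p^{2k}\zeta_p^{s(a-b)}$, which by $\sum_{s=1}^{p-1}\zeta_p^{s(a-b)}=p\delta_{a,b}-1$ and $|S|=\tfrac{p-1}{2}$ equals $\tfrac{p^2-1}{4}p^{2k}\cdot 1_G$; (ii)~$\sum_{a,b\in S}(p(|D_a|+|D_b|)-p^{2k})\ef_{p^{2k}}$, which by Lemma~\ref{D0size} (all $|D_a|=p^{2k-1}-\epsilon p^{k-1}$ for $a\in S$) equals $\tfrac{(p-1)^2}{4}(p^{2k}-2\epsilon p^k)\ef_{p^{2k}}$; and (iii)~the twisted term $\epsilon p^k\sum_{a,b\in S}\sum_{s,t}\zeta_p^{-at-bs}L_v$, where the inner sum runs over $s,t\ne 0$ with $s+t\ne 0$ and $v$ is determined by $s^{1-l}+t^{1-l}=v^{1-l}$.

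Part~(iii) is the heart of the matter. I would interchange the order of summation: for each admissible $(s,t)$ the index $v$ lies in $\ef_p^*$ and is uniquely determined (using $(l-1,p-1)=1$), and $\sum_{a,b\in S}\zeta_p^{-at-bs}=\big(\sum_{a\in S}\zeta_p^{-at}\big)\big(\sum_{b\in S}\zeta_p^{-bs}\big)$. As in the proof of Lemma~\ref{m(1+m)}, multiplication by a fixed non-zero $t$ permutes $S$ when $t\in S$ and carries $S$ onto $T$ when $t\in T$, so $\sum_{a\in S}\zeta_p^{-at}$ equals $m=\sum_{i\in S}\zeta_p^{-i}$ if $t\in S$ and $-(1+m)$ if $t\in T$. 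Hence (iii) regroups, according to the quadratic residue pattern of $(s,t)$, into $\epsilon p^k\big(m^2\Sigma_{SS}+(1+m)^2\Sigma_{TT}-m(1+m)(\Sigma_{ST}+\Sigma_{TS})\big)$, where $\Sigma_{XY}:=\sum_{s\in X,\,t\in Y,\,s+t\ne 0}L_v$. Because $x\mapsto x^{1-l}$ is a bijection of $\ef_p^*$ that fixes the set of squares (as $1-l$ is coprime to $p-1$, hence odd), the number of pairs in $X\times Y$ producing a given $v$ equals the coefficient of the residue class of $v$ in the group ring product $X\cdot Y$ inside $\mathbb{Z}[\ef_p]$, and Lemma~\ref{stl} supplies all of these coefficients. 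Thus each $\Sigma_{XY}$ becomes an explicit combination of $A:=\sum_{t\in S}L_t$ and $B:=\sum_{t\in T}L_t$.

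To conclude, I would collect everything using $L_0=\ef_{p^{2k}}$, the relations $L_0^2=p^{2k}L_0$ and $L_0L_s=\epsilon p^kL_0$ for $s\ne 0$ (which follow from $|D_0|-|D_1|=\epsilon p^k$), the identity $p\,D_{\mathcal{S}}=\tfrac{p-1}{2}L_0+mA-(1+m)B$ (obtained by summing $D_i=\tfrac1p\sum_tL_t\zeta_p^{-it}$ over $i\in S$), and -- decisively -- the value of $m(1+m)$ from Lemma~\ref{m(1+m)} ($\tfrac{p-1}{4}$ if $p\equiv 1\pmod 4$, $-\tfrac{p+1}{4}$ if $p\equiv 3\pmod 4$), which in particular gives $m^2+(1+m)^2=\tfrac{p+1}{2}$ in the first case. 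These identities make the degree-$2$ polynomials in $m$ that appear as coefficients collapse: the $A$- and $B$-parts of (iii) combine into exactly $\epsilon p^k\big(mA-(1+m)B\big)=\epsilon p^{k+1}D_{\mathcal{S}}-\tfrac{p-1}{2}\epsilon p^kL_0$, and one is left with $p^2D_{\mathcal{S}}^2=\tfrac{p^2-1}{4}p^{2k}\cdot 1_G+\epsilon p^{k+1}D_{\mathcal{S}}+\big(\tfrac{(p-1)^2}{4}p^{2k}-\tfrac{p-1}{2}\epsilon p^{k+1}\big)\ef_{p^{2k}}$. Dividing by $p^2$ and comparing with Eq.~(\ref{PDSequation}) yields $\lambda_1-\lambda_2=\epsilon p^{k-1}$, $\lambda_2=\tfrac{(p-1)^2}{4}p^{2k-2}-\tfrac{p-1}{2}\epsilon p^{k-1}$ and $d-\lambda_2=\tfrac{p^2-1}{4}p^{2k-2}$, and a short rewriting checks that these agree with (\ref{para2}). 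Equivalently, one may square $P:=mA-(1+m)B$ directly: Lemmas~\ref{stl} and \ref{m(1+m)} give the compact identity $P^2=\tfrac{p^2-1}{4}p^{2k}\cdot 1_G+\epsilon p^kP$, from which the result drops out using $p\,D_{\mathcal{S}}=\tfrac{p-1}{2}L_0+P$.

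The main obstacle is organizational rather than conceptual: the dichotomy $p\equiv 1$ versus $p\equiv 3\pmod 4$ in Lemmas~\ref{stl} and \ref{m(1+m)} forces the cyclotomic bookkeeping in steps (ii)--(iii) to be carried out twice, and -- the genuinely delicate point -- one must verify that the irrational quantity $m$ cancels completely, so that the resulting PDS parameters are the rational numbers displayed in (\ref{para2}). Everything else is the same bounded group ring arithmetic already carried out in Theorem~\ref{thm1}.
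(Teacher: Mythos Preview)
Your proposal is correct and follows essentially the same route as the paper's proof: expand $p^2D_{\mathcal{S}}^2$ via Eq.~(\ref{dadb}), evaluate the first two pieces directly, and handle the twisted third piece by factoring $\sum_{a,b\in S}\zeta_p^{-at-bs}$ according to the quadratic residue type of $(s,t)$, then invoking Lemma~\ref{stl} to count the resulting $v$'s and Lemma~\ref{m(1+m)} to collapse the quadratic expressions in $m$. The only cosmetic difference is in the final repackaging: the paper converts $\sum_{v\in S}L_v$ and $\sum_{v\in T}L_v$ back into the $D_i$'s by computing the scalar coefficients $a_i$ explicitly, whereas you recognise the combination $m\sum_{v\in S}L_v-(1+m)\sum_{v\in T}L_v$ as $pD_{\mathcal{S}}-\tfrac{p-1}{2}L_0$ via the inverse transform $D_i=\tfrac1p\sum_tL_t\zeta_p^{-it}$; both lead to the identical expression $p^2D_{\mathcal{S}}^2=\tfrac{p^2-1}{4}p^{2k}\cdot 1_G+\epsilon p^{k+1}D_{\mathcal{S}}+\bigl(\tfrac{(p-1)^2}{4}p^{2k}-\tfrac{p-1}{2}\epsilon p^{k+1}\bigr)\ef_{p^{2k}}$.
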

\begin{proof}
    We only prove the case $p\equiv1 \pmod 4$, the proof of the case $p\equiv3 \pmod 4$ is similar.
    Let $S$ and $T$ be the sets of non-zero squares and non-squares of
    $\ef_p$ respectively. Now by Eq. (\ref{dadb}), we have
    \begin{equation}
        \label{expand}
        \begin{array}{lll}
            &&p^2D_\mathcal{S}^2=p^2\sum\limits_{a,b\in S}D_aD_b \\
           &&=\sum\limits_{a,b\in S}(\sum\limits_{s=1}^{p-1}p^{2k}\zeta_p^{s(a-b)}
           +(2p|D_1|-p^{2k})\ef_{p^{2k}}+
           \epsilon p^k\sum\limits_{s, t, s+t\ne 0\atop s^{1-l}+t^{1-l}=v^{1-l}}\zeta_p^{-at-bs}L_v). \\
       \end{array}
    \end{equation}
    Now
    \begin{equation}
        \label{firstwoterms}
        \begin{array}{lll}
            &&\sum\limits_{a,b\in S}(\sum\limits_{s=1}^{p-1}p^{2k}\zeta_p^{s(a-b)})=\frac{p^{2k}(p-1)(p+1)}4, \\
            &&\sum\limits_{a,b\in S}(2p|D_1|-p^{2k})\ef_{p^{2k}}=(\frac{p(p-1)^2}2|D_1|-\frac{p^{2k}(p-1)^2}4)\ef_{p^{2k}}. \\
        \end{array}
    \end{equation}
    To compute the third term in Eq. (\ref{expand}), for $s,t\in\ef_p^*$, we define $\delta(s,t)$ as follows
    $$
    \delta(s,t) = \left\{
                  \begin{array}{ll}
                      m^2 &     \mbox{if}\ s,t\ \mbox{both are squares} \\
                      (1+m)^2 & \mbox{if}\ s,t\ \mbox{both are nonsquares} \\
                      -m(1+m)  & \mbox{others},
                  \end{array}
                  \right.
    $$
    where $m=\sum_{i\in S}\zeta_p^{-i}$. For convenience, denote the set $\{(s,t): s,t \in\ef_p | s\ne 0, t\ne 0, s+t\ne 0\}$ by
    $\Omega$. For $s,t,s+t\ne 0$, define the function $\sigma(s,t)=v$, where $s^{1-l}+t^{1-l}=v^{1-l}$.
    Since $(l-1,p-1)=1$, $\sigma$ is well defined. Now we compute the third term of Eq. (\ref{expand}):
    \begin{equation}
        \label{thirdterm}
        \begin{array}{lll}
            &&\sum\limits_{a,b\in S}\sum\limits_{(s, t)\in\Omega}\zeta_p^{-at-bs}L_{\sigma(s,t)}\\[2ex]
            =&&\sum\limits_{(s, t)\in\Omega}(\sum\limits_{a\in S}(\zeta_p^{-t})^a)(\sum\limits_{b\in S}(\zeta_p^{-s})^b)L_{\sigma(s,t)}
            =\sum\limits_{(s, t)\in\Omega}\delta(s,t)L_{\sigma(s,t)}\\[2ex]
            =&&\sum\limits_{(s,t)\in\Omega\cap (S\times S)}m^2L_{\sigma(s,t)}
              +\sum\limits_{(s,t)\in\Omega\cap (T\times T)}(1+m)^2L_{\sigma(s,t)}
              -\sum\limits_{(s,t)\in\atop{\Omega\setminus((S\times S)\cup(T\times T))}}m(1+m)L_{\sigma(s,t)}. \\
        \end{array}
    \end{equation}
    By Lemma \ref{stl}(1), we know that the multiset
    $\{*\ \sigma(s,t): (s,t)\in\Omega\cap (S\times S) \ *\}= \frac{p-5}4S+\frac{p-1}4T$,
    then
    $$\sum\limits_{(s,t)\in\Omega\cap (S\times S)}m^2L_{\sigma(s,t)}=
     m^2(\frac{p-5}4\sum_{v\in S}L_v+\frac{p-1}4\sum_{v\in T}L_v).$$
     Using similar argument, we can compute the last two terms of Eq. (\ref{thirdterm}).
     Then we have
     \begin{equation}
        \label{combine1}
        \begin{array}{lll}
         &&\sum_{a,b\in S}\sum_{(s, t)\in\Omega}\zeta_p^{-at-bs}L_{\sigma(s,t)} \\[2ex]
         =&&m^2(\frac{p-5}4\sum_{v\in S}L_v+\frac{p-1}4\sum_{v\in T}L_v)+
         (1+m)^2(\frac{p-1}4\sum_{v\in S}L_v+\frac{p-5}4\sum_{v\in T}L_v)                     \\[2ex]
         &&-2m(1+m)(\frac{p-1}4\sum_{v\in \ef_p^*}L_v), \\ [2ex]
         =&&A\sum_{v\in S}L_v+B\sum_{v\in T}L_v,              \\[2ex]
        \end{array}
    \end{equation}
    where $A=\frac{p-5}4m^2+\frac{p-1}4(1+m)^2-\frac{p-1}2m(1+m)$ and
    $B=\frac{p-1}4m^2+\frac{p-5}4(1+m)^2-\frac{p-1}2m(1+m)$.
    Now we see that
    \begin{equation}
        \label{combin2}
        \begin{array}{lll}
        \sum\limits_{a,b\in S}\sum\limits_{(s, t)\in\Omega}\zeta_p^{-at-bs}L_{\sigma(s,t)}
        &=&A\sum_{v\in S}L_v+B\sum_{v\in T}L_v \\[2ex]
        &=&A\sum_{v\in S}(\sum_{i=0}^{p-1}D_i\zeta_p^{vi})+B\sum_{v\in T}(\sum_{i=0}^{p-1}D_i\zeta_p^{vi}) \\ [2ex]
        &=&\sum_{i=0}^{p-1}(A\sum_{v\in S}\zeta_p^{vi}+B\sum_{v\in T}\zeta_p^{vi})D_i. \\[2ex]
        \end{array}
    \end{equation}
    Denote $a_i=A\sum_{v\in S}\zeta_p^{vi}+B\sum_{v\in T}\zeta_p^{vi}$ for $0\le i\le p-1$. Clearly,
    $$
    \begin{array}{lll}
    a_0&=&\frac{p-1}2(A+B) \\
       &=&\frac{p-1}2(-2m^2-2m+\frac{p-3}2) \\
       &=&-(p-1)m(1+m)+\frac{(p-1)(p-3)}4                    \\
       &=&-\frac{(p-1)^2}4+\frac{(p-1)(p-3)}4 \\
       &=&-\frac{p-1}2.   \\
    \end{array}
    $$
    Similarly, we get the following:
    $$
    a_i = \left\{
                  \begin{array}{ll}
                      -(p-1)/2\quad     &     i\in\{0\}\cup T, \\
                      (p+1)/2\quad      &     i\in S. \\
                  \end{array}
                  \right.
    $$
    Now
    \begin{equation}
        \label{combine3}
        \begin{array}{lll}
        \sum\limits_{a,b\in S}\sum\limits_{(s, t)\in\Omega}\zeta_p^{-at-bs}L_{\sigma(s,t)}&=&
        -\frac{p-1}2(\ef_{p^{2k}}-D_\mathcal{S})+\frac{p+1}2D_\mathcal{S}. \\
    \end{array}
    \end{equation}
    By Eqs. (\ref{expand}), (\ref{firstwoterms}) and (\ref{combine3}), we know that
    $$
    \begin{array}{lll}
        &&p^2D_\mathcal{S}D_\mathcal{S}^{(-1)}=p^2D_\mathcal{S}^2\\[2ex]
        =&&\frac{p^{2k}(p-1)(p+1)}4+(\frac{p(p-1)^2}2|D_1|-\frac{p^{2k}(p-1)^2}4)\ef_{p^{2k}}
        +\epsilon p^k(-\frac{p-1}2(\ef_{p^{2k}}-D_\mathcal{S})+\frac{p+1}2D_\mathcal{S}) \\[2ex]
        =&&C_1+C_2D_{\mathcal{S}}+C_3\ef_{p^{2k}},\\
    \end{array}
    $$
    where $C_1=\frac{1}4p^{2k}(p-1)(p+1),  C_2=p^{k+1}\epsilon$ and
    $C_3=\frac{1}4p^{2k}(p-1)^2-\frac{1}2p^{k+1}(p-1)\epsilon$.
    The proof follows by Eq. (\ref{PDSequation}).
    \end{proof}

\begin{remark}
    \label{non-square}
(1) Using similar proof, we may also prove that the set
$$D_\mathcal{N}:=\{x: x \in \ef_{p^{2k}}^*| f(x) \ \mbox{are non-squares}\}$$
is a PDS with the same paramters as in Theorem \ref{thm2}.

(2) In \cite{tan-pott-srgbent}, it is shown that for weakly regular ternary bent function
$f:\ef_{3^{2k}}\rightarrow\ef_3$, the set $D_i:=\{x\in\ef_{3^{2k}}^* | f(x)=i\}$
is a partial difference set for each $0\le i\le 2$. We may see that Theorem \ref{thm2}
is the generalization of the result in \cite{tan-pott-srgbent}.
\end{remark}

With a small modification, we may get the following result.
\begin{theorem}
  \label{thm3}
  Let $f$ be a function satisfying Condition A.
  Let $$D_\mathcal{S}^\prime:=\{x: x \in \ef_{p^{2k}}^*| f(x) \ \mbox{are squares}\},$$
  then $D_\mathcal{S}^\prime$ is a  $(v,d,\lambda_1,\lambda_2)$-PDS, where
  \begin{equation}
      \label{para3}
      \begin{array}{lll}
          v=p^{2k},\\
          d=\frac{1}2(p^k+p^{k-1}+2\epsilon)(p^k-\epsilon),\\
          \lambda_1=\frac{1}4(p^k+p^{k-1}+2\epsilon)^2-\frac{3\epsilon}2(p^k+p^{k-1}+2\epsilon)+p^k\epsilon,\\
          \lambda_2=\frac{1}4(p^k+p^{k-1})(p^k+p^{k-1}+2\epsilon).
      \end{array}
  \end{equation}
\end{theorem}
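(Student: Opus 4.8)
The plan is to avoid repeating the group-ring computation of Theorem \ref{thm2} and instead observe that $D_\mathcal{S}^\prime$ is the \emph{complement} of $D_\mathcal{N}$ inside $\ef_{p^{2k}}^*$. Indeed, for every non-zero $x\in\ef_{p^{2k}}$ the value $f(x)$ lies in $\ef_p=\{0\}\cup S\cup T$, and $f(x)$ is a square precisely when $f(x)\in\{0\}\cup S$; hence $\ef_{p^{2k}}^*$ is the disjoint union of $D_\mathcal{S}^\prime$ and $D_\mathcal{N}$, that is, $D_\mathcal{S}^\prime=\ef_{p^{2k}}\setminus(D_\mathcal{N}\cup\{0\})$. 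By Remark \ref{non-square}(1), $D_\mathcal{N}$ is a PDS with the parameters (\ref{para2}); moreover $1_G=0\notin D_\mathcal{N}$ and $D_\mathcal{N}^{(-1)}=D_\mathcal{N}$, because $f(0)=0$ and $f(-x)=f(x)$. So by Result \ref{pdsandsrg} the Cayley graph $\Gamma$ generated by $D_\mathcal{N}$ is strongly regular, its complement $\overline\Gamma$ is strongly regular as well, and since $\overline\Gamma$ is exactly the Cayley graph generated by the connection set $D_\mathcal{S}^\prime$ (which is symmetric and misses $0$ for the same reasons), Result \ref{pdsandsrg} again shows that $D_\mathcal{S}^\prime$ is a PDS.

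It remains to compute the parameters, which is the standard complementation at the group-ring level: writing $\overline D=G-1_G-D_\mathcal{N}$ and inserting the identity (\ref{PDSequation}) for $D_\mathcal{N}D_\mathcal{N}^{(-1)}=D_\mathcal{N}^2$ together with $G^2=p^{2k}G$ and $GD_\mathcal{N}=|D_\mathcal{N}|\,G$, one gets that $\overline D=D_\mathcal{S}^\prime$ satisfies (\ref{PDSequation}) with
$$v=p^{2k},\qquad d=p^{2k}-1-|D_\mathcal{N}|,\qquad \lambda_1=p^{2k}-2-2|D_\mathcal{N}|+\lambda_2(D_\mathcal{N}),\qquad \lambda_2=p^{2k}-2|D_\mathcal{N}|+\lambda_1(D_\mathcal{N}),$$
where $|D_\mathcal{N}|,\lambda_1(D_\mathcal{N}),\lambda_2(D_\mathcal{N})$ are the entries of (\ref{para2}) (in particular $|D_\mathcal{N}|=\tfrac12(p^k-p^{k-1})(p^k-\epsilon)$). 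Substituting and simplifying with $\epsilon^2=1$ — it is convenient to put $a=p^k-p^{k-1}$, $b=p^k+p^{k-1}$, so that $ap^k=p^{2k}-p^{2k-1}$ and $\tfrac14a^2+p^{2k-1}=\tfrac14b^2$ — turns each of these three expressions into the corresponding entry of (\ref{para3}); for instance $d=p^{2k}-1-\tfrac12a(p^k-\epsilon)=\tfrac12(b+2\epsilon)(p^k-\epsilon)$. This verification is the only genuine computation in the argument, and it is routine.

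For completeness, the ``small modification'' hinted at just before the statement is the direct route: since $D_\mathcal{S}^\prime=\sum_{i\in\{0\}\cup S}D_i-1_G$, one expands $p^2(D_\mathcal{S}^\prime)^2$ via (\ref{dadb}) with $a,b$ running over $\{0\}\cup S$. The partial sums over $a,b\in S$ and over $a=b=0$ are already evaluated in the proofs of Theorems \ref{thm2} and \ref{thm1}, so the only new term is $p^2\sum_{b\in S}D_0D_b$; evaluating it needs that $\sum_{b\in S}\zeta_p^{-bs}$ equals $m$ or $-(1+m)$ according as $s\in S$ or $s\in T$, that the multisets $\{\sigma(s,t):(s,t)\in\Omega,\ s\in S\}$ and $\{\sigma(s,t):(s,t)\in\Omega,\ s\in T\}$ equal $\tfrac{p-3}2S+\tfrac{p-1}2T$ and $\tfrac{p-1}2S+\tfrac{p-3}2T$ respectively — both consequences of Lemma \ref{stl} exactly as in Theorem \ref{thm2} — and finally Lemma \ref{m(1+m)} to clear the surviving powers of $m$. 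On this second route the main obstacle is the bookkeeping that forces the $\zeta_p$-dependent terms to collapse to a rational-integer PDS equation, compounded by having to run the computation separately for $p\equiv1$ and $p\equiv3\pmod 4$ (using part~$(2)$ of Lemmas \ref{stl} and \ref{m(1+m)} in the latter case); the complementation route avoids all of that.
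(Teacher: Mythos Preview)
Your primary argument via complementation is correct and is genuinely different from the paper's route. The paper writes $D_\mathcal{S}^\prime=D+D_\mathcal{S}$ and expands $(D+D_\mathcal{S})^2$, invoking the group-ring identities already obtained for $D^2$ and $D_\mathcal{S}^2$ in Theorems~\ref{thm1} and~\ref{thm2} together with a cross-term $DD_\mathcal{S}$ handled ``by similar computations''. That is essentially the second route you sketch in your last paragraph. Your main route instead passes through $D_\mathcal{N}$: once Remark~\ref{non-square}(1) grants that $D_\mathcal{N}$ is a PDS with the parameters~(\ref{para2}), the standard complement formula for PDSs (equivalently, for strongly regular Cayley graphs via Result~\ref{pdsandsrg}) immediately yields the parameters of $D_\mathcal{S}^\prime=G\setminus(D_\mathcal{N}\cup\{0\})$. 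This buys you a clean, computation-free proof with no $p\bmod 4$ case split and no need to evaluate the mixed sum $DD_\mathcal{S}$; the only price is that you must take Remark~\ref{non-square}(1) as established, whereas the paper's decomposition relies only on Theorems~\ref{thm1} and~\ref{thm2} proper. Your parameter verification with $a=p^k-p^{k-1}$, $b=p^k+p^{k-1}$ is accurate.
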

\begin{proof}
    Clearly $D_\mathcal{S}^\prime=D+D_\mathcal{S}$, where $D=\{x:x\in\ef_{p^{2k}}^*|f(x)=0\}$.
    Then $D_\mathcal{S}^\prime(D_\mathcal{S}^\prime)^{(-1)}=(D_\mathcal{S}+D)(D_\mathcal{S}+D)$. The result follows from
    Theorems \ref{thm1}, \ref{thm2} and similar group ring computations as those in Theorem \ref{thm2}.
\end{proof}
\begin{remark}
    By using MAGMA,  we know that
    Theorems \ref{thm1}, \ref{thm2}, \ref{thm3} are not true for non-weakly regular bent function
    $f(x)=\tr(\xi^7x^{98})$ over $\ef_{3^6}$, where $\xi$ is a primitive element of $\ef_{3^6}$.
    This implies that the weakly regular condition is necessary.
\end{remark}

We conclude this section by a result on association schemes.
Combining Result \ref{amorphic} and Theorems
\ref{thm1},\ref{thm2},\ref{thm3}, we have the following result.
\begin{theorem}
    \label{thm4}
    Let $f$ be a function satisfying Condition A.
    Define the following sets:
    $$
    \begin{array}{lll}
    D&=&\{x: x \in \ef_{p^{2k}}^*| f(x)=0\},  \\
    D_\mathcal{S}&=&\{x: x \in \ef_{p^{2k}}^*| f(x)\ \mbox{are non-zero squares}\}, \\
    D_\mathcal{N}&=&\{x: x \in \ef_{p^{2k}}^*| f(x)\ \mbox{are non-squares}\}.
    \end{array}
    $$
    Then $\{\ef_{p^{2k}}; \{0\},D,D_\mathcal{S},D_\mathcal{N}\}$ is an amorphic association scheme of class $3$.
\end{theorem}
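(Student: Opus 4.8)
The plan is to apply Van Dam's criterion (Result~\ref{amorphic}): I must exhibit $D$, $D_\mathcal{S}$, $D_\mathcal{N}$ as an edge-decomposition of the complete graph on $\ef_{p^{2k}}$ into strongly regular graphs that are \emph{all} of Latin square type or \emph{all} of negative Latin square type. Then the conclusion is immediate.

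First I would observe that the three sets partition $\ef_{p^{2k}}^*$: every $x\in\ef_{p^{2k}}^*$ has $f(x)$ equal to $0$, to a non-zero square, or to a non-square, and these cases are mutually exclusive. Since $f(-x)=f(x)$ we have $D^{(-1)}=D$, $D_\mathcal{S}^{(-1)}=D_\mathcal{S}$, $D_\mathcal{N}^{(-1)}=D_\mathcal{N}$, and none of the sets contains $0$; hence the Cayley graphs $\Gamma_D$, $\Gamma_{D_\mathcal{S}}$, $\Gamma_{D_\mathcal{N}}$ on $\ef_{p^{2k}}$ are well-defined simple graphs, and each edge $\{g,h\}$ of the complete graph lies in precisely one of them, namely the one indexed by the part of $\ef_{p^{2k}}^*$ containing $g-h$. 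So they form an edge-decomposition of the complete graph on $\ef_{p^{2k}}$. By Theorems~\ref{thm1} and~\ref{thm2} together with Remark~\ref{non-square}(1), each of $D$, $D_\mathcal{S}$, $D_\mathcal{N}$ is a partial difference set, so by Result~\ref{pdsandsrg} each of the three Cayley graphs is strongly regular.

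Next I would check that the three strongly regular graphs all have the \emph{same} square type. Put $n=p^k$ and $\varepsilon=-\epsilon$. For $\Gamma_D$ take $r=p^{k-1}+\epsilon$; substituting into $\bigl(n^2,\,r(n+\varepsilon),\,-\varepsilon n+r^2+3\varepsilon r,\,r^2+\varepsilon r\bigr)$ and using $p^{2k}=n^2$ reproduces exactly the parameters $(v,d,\lambda_1,\lambda_2)$ of (\ref{para1}). For $\Gamma_{D_\mathcal{S}}$ take $r=\frac12(p^k-p^{k-1})$; the same substitution reproduces (\ref{para2}), and by Remark~\ref{non-square}(1) the graph $\Gamma_{D_\mathcal{N}}$ has the identical parameters, hence the identical value of $r$ and of $\varepsilon$. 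Therefore, with the one sign choice $\varepsilon=-\epsilon$, all three graphs are of Latin square type when $\epsilon=1$ and all of negative Latin square type when $\epsilon=-1$.

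Finally I would apply Result~\ref{amorphic} to this edge-decomposition of the complete graph on $\ef_{p^{2k}}$ to conclude that $\{\ef_{p^{2k}};\{0\},D,D_\mathcal{S},D_\mathcal{N}\}$ is a $3$-class amorphic association scheme, as claimed. The only step requiring actual work is the parameter bookkeeping: verifying that the common sign $\varepsilon=-\epsilon$ simultaneously fits all three parameter sets, and in particular that $D_\mathcal{S}$ and $D_\mathcal{N}$ share the type of $D$. This is routine algebra once Theorems~\ref{thm1} and~\ref{thm2} are in hand, so I do not expect a genuine obstacle.
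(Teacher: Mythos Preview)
Your proposal is correct and follows essentially the same route as the paper, which simply states that the theorem follows by combining Result~\ref{amorphic} with Theorems~\ref{thm1}--\ref{thm3}. You have merely made explicit the partition/symmetry checks and the Latin-square-type parameter bookkeeping that the paper leaves implicit, and you invoke Remark~\ref{non-square}(1) for $D_\mathcal{N}$ where the paper instead points to Theorem~\ref{thm3} (from which $D_\mathcal{N}=\ef_{p^{2k}}^*\setminus D_\mathcal{S}'$ yields the same conclusion).
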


\section{Newness}
In this section, we discuss the known constructions of the SRGs with parameters (\ref{para2}) and (\ref{para3}).
Since there are many constructions of the Latin square type SRGs, we only discuss the newness of the negative Latin
square type SRGs with the above parameters, namely, $\epsilon=-1$ in parameters (\ref{para2}) and (\ref{para3}).

One may check the known constructions of the SRGs with the
parameters (\ref{para2}), (\ref{para3}) via the online database
\cite{brouwer}. It is well known that projective two-weight codes
can be used to construct SRGs (\cite{calderbank-kantor}), one can
check the known constructions of two-weight codes via the online
database \cite{ericdatabase}. Next we give the following
constructions of SRGs with parameters (\ref{para2}) and
(\ref{para3}).

\begin{result}
    \cite{calderbank-kantor}
    \label{FE1}
    Let $k = 2m$ and $\mathcal{Q}$ be a non-degnerate quadratic form on $\ef_q$ with $q$ odd. Let
    $$M =\{v\in \ef_q^k\setminus\{0\} | \tr(\mathcal{Q}(v))\ \mbox{are non-zero squares}\},$$
    and $$M^\prime =\{v\in \ef_q^k\setminus\{0\} | \tr(\mathcal{Q}(v))\ \mbox{are squares}\}.$$ Then:

    $(1)$ the Cayley graph generated by $M$ in  $(\ef_q^k,+)$ is a
    $(p^{2k},\frac{1}2(p^k-p^{k-1})(p^k-\epsilon),\frac{1}4(p^k-p^{k-1})^2-
    \frac{3\epsilon}2(p^k-p^{k-1})+p^k\epsilon,\frac{1}2(p^k-p^{k-1})(\frac{1}2(p^k-p^{k-1})-\epsilon))$
    strongly regular graph, where $\varepsilon=\pm 1$ and depends on $\mathcal{Q}$.

    $(2)$ the Cayley graph generated by $M^\prime$ in  $(\ef_q^k,+)$ is a
    $(p^{2k},\frac{1}2(p^k+p^{k-1}+2\epsilon)(p^k-\epsilon),\frac{1}4(p^k+p^{k-1}+2\epsilon)^2-
    \frac{3\epsilon}2(p^k+p^{k-1}+2\epsilon)+p^k\epsilon,\frac{1}4(p^k+p^{k-1})(p^k+p^{k-1}+2\epsilon))$
    strongly regular graph, where $\varepsilon=\pm 1$ and depends on $\mathcal{Q}$.
\end{result}

The SRGs constructed by Result \ref{FE1} (1) are called
\textit{FE1}, and the SRGs from Result \ref{FE1} (2) are called
\textit{RT2} in \cite{calderbank-kantor}. They both are also
called \textit{affine polar graphs} in \cite[P.
852]{handbookcomb}. To the best of our knowledge, affine polar
graphs are the only known infinitive construction of the SRGs with
parameters (\ref{para2}) and (\ref{para3}) when $p\ge 5$.

For small parameters, now we discuss the known constructions of
the SRGs with parameters (\ref{para2}) and (\ref{para3}).

When $p=3$, by Theorem \ref{thm2} and the bent function in Result
\ref{binomialbent}, in field $\ef_{3^8}$,  we get an SRG with
parameter $(6561,2214,729,756)$. Computed by MAGMA, the order of
its automorphism group and the $3$-rank of the adjacent matrix of
the SRG are $(2^4\cdot3^8, 566)$. By comparing to \cite[Table
4]{tan-pott-srgbent}, we know that this graph is new.

When $p=5$, in the field $\ef_{5^4}$, the known constructions of
the SRGs with parameters $(625,260,105,110),(625,364,213,210)$ are
affine polar graphs, or from Theorems \ref{thm2},\ref{thm3}, or
from the projective two weight codes in Chen (\cite{ericreport}).
It is verified that Chen's SRGs are isomorphic to affine polar
graphs and the SRGs from the bent function in Result
\ref{binomialbent} are new.

When $p=7$, in the field $\ef_{7^4}$, the known constructions of
the SRGs with parameters $(2401,1050,455,462),(2401,1350,761,756)$
are the same as the case $p=5$. Using MAGMA, it is verified that
Chen's SRGs (\cite{ericreport}) are also isomorphic to affine
polar graphs and the SRGs from the bent function in Result
\ref{binomialbent} are new.

In the following two tables, we give some computational results of
the SRGs from different constructions. In the first column, we
list the parameters of the SRGs. The group $Aut(\mathcal{G})$ is
the full automorphism group of the SRG $\mathcal{G}$, and $M$
denotes an adjacency matrix of $\mathcal{G}$, to be considered in
$\ef_p$. The abbreviation $n.L.$ means the SRG is of negative
Latin square type. The symbol $\heartsuit$ means the SRG is
constructed by the bent function in Result \ref{binomialbent}.


\begin{center}
\tabcaption{SRGs with parameters (\ref{para2})}
\label{table1}
\begin{tabular}{c|c|c|c|c} \hline
 $(v,k,\lambda,\mu)$                 &Type     & Rank of $M$   &$|Aut(\mathcal{G})|$                 &note\\ \hline   \hline
 $(625,260,105,110)$                 &n.L.     &86             &$2^6\cdot3\cdot5^6\cdot13$           &affine polar        \\ \hline
 $(625,260,105,110)\heartsuit$       &n.L.     &104            &$2^4\cdot5^4$                        &new   \\ \hline
 $(2401,1050,455,462)$               &n.L.     &237            &$2^6\cdot3^2\cdot5^2\cdot7^6$        &affine polar                \\ \hline
 $(2401,1050,455,462)\heartsuit$     &n.L.     &335            &$2^3\cdot3\cdot7^4$                  &new \\ \hline
 \end{tabular}
\end{center}

\begin{center}
\tabcaption{SRGs with parameters (\ref{para3})}
\label{table2}
\begin{tabular}{c|c|c|c|c} \hline
 $(v,k,\lambda,\mu)$                 &Type     & Rank of $M$   &$|Aut(\mathcal{G})|$                 &note\\ \hline   \hline
 $(625,364,213,210)$                 &n.L.     &625            &$2^6\cdot3\cdot5^6\cdot13$           &affine polar        \\ \hline
 $(625,364,213,210)\heartsuit$       &n.L.     &625            &$2^4\cdot5^4$                        &new   \\ \hline
 $(2401,1350,761,756)$               &n.L.     &2401           &$2^6\cdot3^2\cdot5^2\cdot7^6$        &affine polar                \\ \hline
 $(2401,1350,761,756)\heartsuit$     &n.L.     &2401           &$2^3\cdot3\cdot7^4$                  &new  \\ \hline
 \end{tabular}
\end{center}

\begin{remark}
    From Tables \ref{table1} and \ref{table2}, we conjecture that the bent functions in Result \ref{binomialbent} can give a family of
    SRGs of negative Latin square type which are not isomorphic to the affine polar graphs. It is difficult to prove it in general cases.

    We may see that the automorphism groups of the Cayley graphs generated by the PDSs in (\ref{graphs}) have the subgroup of
    order $p^{2k}$ coming from translations $\tau_c:x\mapsto x+c$ for any $c\in\ef_{p^{2k}}$ and the subgroup of order $2k$ coming from
    the Galois automorphism of $\ef_{p^{2k}}$. This means that $2kp^{2k}$ divides the order of the automorphism groups of the Cayley graphs
    of (\ref{graphs}). For the bent functions given by Result (\ref{binomialbent}), we conjecture that $|Aut(\mathcal{G}(X))|$ is not divisible by $p^{2k+1}$,
    where $X=D_\mathcal{S}\ \mbox{or}\ D_\mathcal{S}^\prime$. This is a possible method to prove that they are new.
\end{remark}

\begin{acknowledgements}
Research supported by the National Research Foundation of Singapore under Research
Grant NRF-CRP2-2007-03 and by the Nanyang Technological University under Research Grant
M58110040. The authors would like to thank Professor Alexander Pott for helpful discussions.
They are also indebted to one of the anonymous referees to point out the result in Proposition 1 and the
two subgroups of the automorphism groups of the SRGs in this paper.
\end{acknowledgements}


\end{document}